\documentclass[11pt,letterpaper]{amsart}
\usepackage{amsfonts,amsmath,amsthm,amssymb}

\usepackage{graphicx}
\usepackage{setspace}
\usepackage{thmtools}

\usepackage{dsfont}
\usepackage{grffile}

\usepackage [english]{babel}
\usepackage [autostyle, english = american]{csquotes}
\MakeOuterQuote{"}

\numberwithin{equation}{section}

\usepackage{xcolor}
\usepackage{hyperref}
\hypersetup{
    colorlinks = true,
    linkcolor = {blue}, urlcolor={blue}
}

\definecolor{c1}{rgb}{0,0,1} 
\hypersetup{
    linkcolor= {c1}, 
    citecolor={c1}, 
    urlcolor={c1} 
}

\newtheorem{thm}{Theorem}

\newtheorem{lemma}{Lemma}

\numberwithin{equation}{section}
\numberwithin{lemma}{section}
\numberwithin{cor}{section}
\numberwithin{prop}{section}

\setcounter{tocdepth}{3}

\begin{document}

{
 \title[Fractional part correlations and the M{\"o}bius function]{Fractional part correlations and the M{\"o}bius function}
  \author{Gordon Chavez}

\date{}
  \maketitle
}

\begin{abstract}
We show that 
$$
\sum_{n\neq m}\frac{\mu(n)\mu(m)}{nm}E_{X}\left(\{nx\}\{mx\}\right)=-\frac{9}{2\pi^{2}}+O\left(\frac{1}{X}\right),
$$
where $x$ is uniformly distributed in $[0,X]$ with $X\in \mathbb{N}$, $E_{X}(.)$ denotes the expected value, $\mu(.)$ denotes the M{\"o}bius function, and $\{.\}$ denotes the fractional part function. 
\end{abstract}

MSC 2020 subject classification: 11A25, 11K65, 11N64
\setcounter{tocdepth}{1}

\tableofcontents

\section{Introduction}

For a given real number $x$, the floor function $\lfloor x\rfloor$ outputs the largest integer less than or equal to $x$, i.e., 
\begin{equation}
\lfloor x \rfloor=\textnormal{max}\left(k \in \mathbb{Z}\hspace{.05cm}|\hspace{.05cm} k\leq x\right). \label{floor}
\end{equation}
The fractional part function $\{x\}$ may then be defined in terms of (\ref{floor}) as
\begin{equation}
\{x\}=x-\lfloor x \rfloor. \label{frac}
\end{equation}
These simple functions find a large number of applications in mathematics, computer science, and related fields. It is natural to consider the correlations between $\{nx\}$ and $\{mx\}$, where $n$ and $m$ are fixed positive integers and $x$ varies over the real line. These correlations are defined using the expected value, which itself is defined here as the integral
\begin{equation}
E_{X}\left(f(x)\right)=\frac{1}{X}\int_{0}^{X}f(x)dx, \hspace{.25cm} X\in \mathbb{N}.\label{expected value}
\end{equation}
The expected value is thus evaluated over an integer-valued range. The correlations we primarily consider are then $E_{X}\left(\{nx\}\{mx\}\right)$. Numerical evidence shows that this correlation function has nontrivial, number theoretic structure. As an example, in Figure \ref{fig0} we depict an approximation of $E_{100}\left(\{nx\}\{6x\}\right)$. There are several observations to note: 1. The correlation jumps if $n$ is a multiple of $6$ or vice versa. 2. The correlation is minimal at $n$ such that $n$ and $6$ are coprime. 3. The correlation is higher for $n$ such that $n$ and $6$ share a prime factor and further, this correlation is higher still for $n$ such that $n$ has a repeated prime factor that is shared with $6$. These properties are observed numerically for integers other than $6$ as well. An explicit formula for $E_{X}\left(\{nx\}\{mx\}\right)$ is currently unknown, but its number theoretic properties merit further study. To that end, this paper investigates connections between this correlation function and the well-known M{\"o}bius function.

\begin{figure}
\centering
\includegraphics[width=\linewidth]{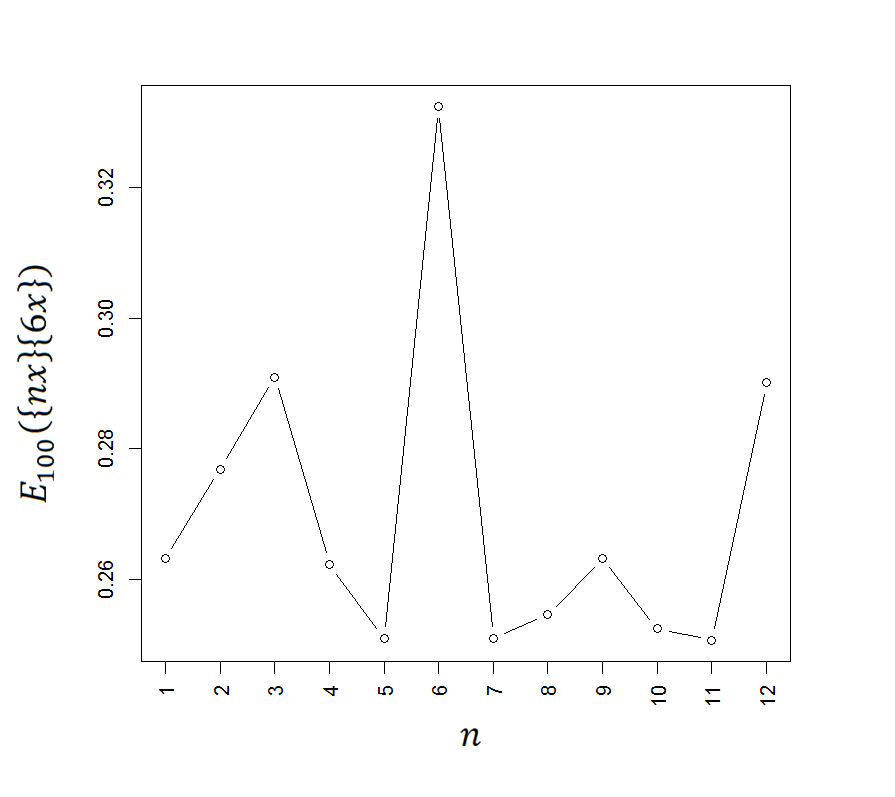}
\caption{Numerical approximation of $E_{100}\left(\{nx\}\{6x\}\right)$ for $n=1,2,...,12$. $E_{100}\left(\{nx\}\{6x\}\right)$ is a numerical approximation of the integral given by (\ref{expected value}).}
\label{fig0}
\end{figure}

The M{\"o}bius function $\mu(n)$, which is defined for $n \in \mathbb{N}$ as
\begin{equation}
\mu(n)=\begin{cases} 0; & \textnormal{$n$ has a repeated prime factor (not square-free)} \\ (-1)^{\omega(n)}; & \textnormal{$n$ has $\omega(n)$ distinct prime factors (square-free)} \end{cases} \label{mobius}
\end{equation}
is a major function in number theory. It has the important property that its sum over the divisors of any $n>1$ vanishes. That is,
\begin{equation}
\sum_{m|n}\mu(m)=0 \label{mobius vanishing}
\end{equation}
for all $n>1$. This enables the well-known technique of M{\"o}bius inversion. A related and similarly important function is the summatory M{\"o}bius function, also called the Mertens function $M(n)$, defined as 
\begin{equation}
M(n)=\sum_{m \leq n}\mu(m)=\sum_{\substack{m \nmid n \\ m<n}}\mu(m). \label{mertens} 
\end{equation}
The straightforward vanishing of the divisor sum (\ref{mobius vanishing}) contrasts strongly with the behavior of the non-divisor sum (\ref{mertens}), as the latter grows in average magnitude and fluctuates irregularly as $n$ increases. The enigmatic behavior of $M(n)$ has been linked to many important results in number theory. Landau \cite{landau} noted that the result 
\begin{equation}
M(n)= o(n) \label{mobius pnt}
\end{equation}
is equivalent to the prime number theorem (PNT) and Littlewood showed \cite{littlewood} that the stronger and unproven statement $M(n)=O\left(n^{1/2+\epsilon}\right)$ for all $\epsilon>0$ is equivalent to the famous Riemann hypothesis (RH).

In this paper we prove the following result:
\begin{thm}
\begin{equation}
\sum_{n \neq m}\frac{\mu(n)\mu(m)}{nm}E_{X}\left(\{nx\}\{mx\}\right)=-\frac{9}{2\pi^{2}}+O\left(\frac{1}{X}\right)\label{mainresult conj}
\end{equation}\label{mainthm}
\end{thm}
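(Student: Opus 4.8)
The plan is to reduce the double sum to two tractable pieces by first evaluating the inner correlation $E_X(\{nx\}\{mx\})$ in closed form, and then summing against $\mu(n)\mu(m)/(nm)$. First I would insert the Fourier expansion $\{y\}=\tfrac12-\tfrac1\pi\sum_{k\ge1}\frac{\sin(2\pi ky)}{k}$ into the product $\{nx\}\{mx\}$ and integrate term by term over $[0,X]$. Because $X\in\mathbb N$ and every frequency that appears is an integer, all the oscillatory integrals $\frac1X\int_0^X\sin(2\pi\ell x)\,dx$ and $\frac1X\int_0^X\cos(2\pi\ell x)\,dx$ with $\ell\in\mathbb Z\setminus\{0\}$ vanish identically, so only the resonant terms $jn=km$ survive. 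With $d=\gcd(n,m)$ this yields the exact, $X$-independent identity
\[
E_X(\{nx\}\{mx\})=\frac14+\frac{\gcd(n,m)^2}{12\,nm},
\]
which can equally be seen from the fact that $\{nx\}\{mx\}$ has period $1/d$, a quantity whose reciprocal $X$ is an integer multiple, so the average over $[0,X]$ is the average over one period.

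Substituting this into (\ref{mainresult conj}) splits the sum as $S=\tfrac14 B+\tfrac1{12}C$, where $B=\sum_{n\ne m}\frac{\mu(n)\mu(m)}{nm}$ and $C=\sum_{n\ne m}\frac{\mu(n)\mu(m)\gcd(n,m)^2}{n^2m^2}$. The sum $C$ converges absolutely; since $\mu$, $\gcd$, and the denominators are multiplicative, the full double sum (diagonal included) factors as an Euler product whose local factor at $p$ is $1-p^{-2}-p^{-2}+p^{-2}=1-p^{-2}$, giving $\prod_p(1-p^{-2})=\zeta(2)^{-1}=6/\pi^2$. Subtracting the diagonal $\sum_n\mu(n)^2/n^2=\zeta(2)/\zeta(4)=15/\pi^2$ leaves $C=-9/\pi^2$, so $\tfrac1{12}C=-\tfrac{3}{4\pi^2}$. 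For $B$, the full double sum equals $\bigl(\sum_n\mu(n)/n\bigr)^2=0$, whence $B=-15/\pi^2$ after removing the same diagonal, and $\tfrac14 B=-\tfrac{15}{4\pi^2}$. Adding the two pieces gives $-\tfrac{15}{4\pi^2}-\tfrac{3}{4\pi^2}=-\tfrac{9}{2\pi^2}$, the claimed main term.

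A cleaner packaging, which I would use to organize the error analysis, is to interchange the sum with the integral and recognize the closed form $F(x):=\sum_{n\ge1}\frac{\mu(n)}{n}\{nx\}=-\frac{\sin(2\pi x)}{\pi}$: the constant Fourier coefficient vanishes because $\sum_n\mu(n)/n=0$, while the coefficient of $\sin(2\pi Nx)$ collapses to $\frac1N\sum_{d\mid N}\mu(d)=[N=1]/N$ by (\ref{mobius vanishing}). Then $S=E_X(F^2)-\sum_n\frac{\mu(n)^2}{n^2}E_X(\{nx\}^2)=\frac1{2\pi^2}-\frac13\cdot\frac{15}{\pi^2}=-\frac{9}{2\pi^2}$, using $E_X(\{nx\}^2)=\tfrac13$ and $E_X(\sin^2 2\pi x)=\tfrac12$ (again exact since $X\in\mathbb N$).

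The main obstacle is the piece $B$ (equivalently, the constant coefficient of $F$): unlike $C$ it is only conditionally convergent, and its evaluation rests on $\sum_n\mu(n)/n=0$, i.e.\ on the prime number theorem in the form (\ref{mobius pnt}). The absolutely convergent contributions ($C$ and the diagonal $\sum_{n\le X}\mu(n)^2/n^2$) have tails of size $O(\sum_{n>X}n^{-2})=O(1/X)$, which is precisely the remainder recorded in (\ref{mainresult conj}); the step I expect to require the most care is confirming that the conditionally convergent $\sum\mu(n)/n$-part contributes nothing larger than this after averaging against the resonant structure on $[0,X]$. The natural way to dispatch it is to lean on the exact identity $F(x)=-\sin(2\pi x)/\pi$, so that this delicate piece is evaluated in closed form rather than truncated, and the entire $O(1/X)$ is carried by the absolutely convergent tails and the boundary terms of $\frac1X\int_0^X$.
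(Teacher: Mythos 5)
Your proposal is correct, and its main route is genuinely different from the paper's. The paper truncates the identity $\sin(2\pi x)=-\pi\sum_{n}\frac{\mu(n)}{n}\{nx\}$ at $n\le N$, squares, takes expectations (using $E_X(\{nx\}^2)=\tfrac13$ for the diagonal), and then spends most of its effort bounding the truncation error $\widetilde{\varepsilon}_N$ via partial summation against $M(u)$ and the classical bound $M(u)=O(u e^{-c\sqrt{\log u}})$, before letting $N\to\infty$. You instead evaluate the correlation in closed form,
\begin{equation*}
E_X\left(\{nx\}\{mx\}\right)=\frac14+\frac{\gcd(n,m)^2}{12\,nm},
\end{equation*}
which is exact and $X$-independent for $X\in\mathbb{N}$ (your periodicity argument is the clean justification: the integrand has period $1/\gcd(n,m)$, of which $X$ is an integer multiple, so the average reduces to $\int_0^1$, computable by Parseval; this is the classical Franel--Landau identity, and it in fact contradicts the paper's introductory remark that no explicit formula for $E_X(\{nx\}\{mx\})$ is known). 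The resulting decomposition $S=\tfrac14 B+\tfrac1{12}C$ then needs only the absolutely convergent Euler product $\sum_{n,m}\mu(n)\mu(m)\gcd(n,m)^2 n^{-2}m^{-2}=1/\zeta(2)$, the diagonal $\zeta(2)/\zeta(4)=15/\pi^2$, and the PNT in the form $\sum_{n\le N}\mu(n)/n\to0$; your arithmetic ($-\tfrac{15}{4\pi^2}-\tfrac{3}{4\pi^2}=-\tfrac{9}{2\pi^2}$) checks out. What your route buys is substantial: it bypasses the entire error analysis involving $M(u)$, the distributional derivative of $\{ux\}$, and the zero-free-region bound, and it yields the sharper conclusion that the symmetric limit of the partial sums equals $-\tfrac{9}{2\pi^2}$ exactly, with the $O(1/X)$ term superfluous. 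What the paper's route buys is an explicit rate in $N$ for the truncated double sum. Two points deserve explicit care in a final writeup: the double sum over $n\neq m$ is only conditionally convergent, so you must fix the interpretation as the limit of symmetric partial sums $n,m\le N$ (under which $B_N=(\sum_{n\le N}\mu(n)/n)^2-\sum_{n\le N}\mu(n)^2/n^2$ converges as you claim); and your alternative packaging via $F(x)=-\sin(2\pi x)/\pi$ leans on the uniform validity of Davenport's identity, which requires more than the formal rearrangement sketched in the paper's Lemma \ref{sin series lemma} --- your first packaging avoids this entirely and is the one to prefer.
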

\hspace{-.45cm}Theorem \ref{mainthm} demonstrates a novel connection between the M{\"o}bius function $\mu(.)$ and the correlations $E_{X}\left(\{nx\}\{mx\}\right)$. Numerical evidence for (\ref{mainresult conj}) is depicted in Figure \ref{fig1}. We next present two lemmas and then present the proof of Theorem \ref{mainthm}.

\begin{figure}
\centering
\includegraphics[width=\linewidth]{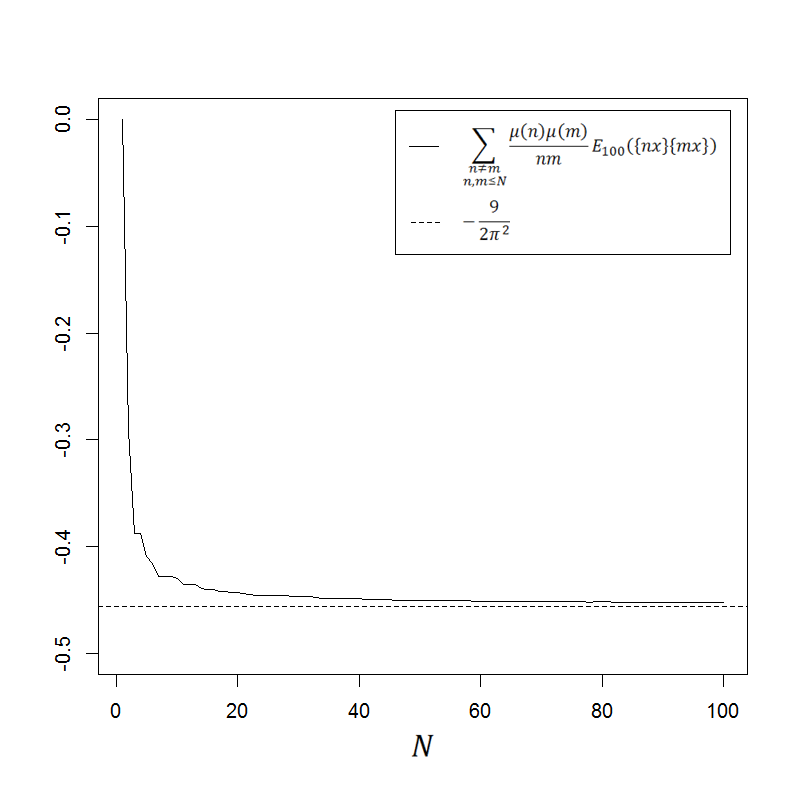}
\caption{Partial sums (Solid) corresponding to the series on (\ref{mainresult conj})'s left-hand side for $X=100$ and $1\leq N\leq 100$ and a horizontal line at the value $-\frac{9}{2\pi^{2}}$ (Dashed). $E_{100}\left(\{nx\}\{mx\}\right)$ is a numerical approximation of the integral given by (\ref{expected value}).}
\label{fig1}
\end{figure}

\section{Lemmas}
The first lemma is essentially an application of M{\"o}bius inversion and the PNT, which provides an important relationship between the sine function, the fractional part function, and the M{\"o}bius function.
\begin{lemma}
For all $x \in \mathbb{R}$,
\begin{equation}
\sin(2\pi x)=-\pi\sum_{n=1}^{\infty}\frac{\mu(n)}{n}\left \{nx\right\}. \label{sin series}
\end{equation} \label{sin series lemma}
\end{lemma}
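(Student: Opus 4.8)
The plan is to derive (\ref{sin series}) from the classical Fourier expansion of the sawtooth wave. For every non-integer $t$ one has
\begin{equation}
\{t\}=\frac{1}{2}-\frac{1}{\pi}\sum_{k=1}^{\infty}\frac{\sin(2\pi kt)}{k},
\end{equation}
and I would substitute $t=nx$ into the right-hand side of (\ref{sin series}). Formally this turns $\sum_{n\geq 1}\frac{\mu(n)}{n}\{nx\}$ into a constant piece $\frac{1}{2}\sum_{n\geq1}\frac{\mu(n)}{n}$ plus a double sum $-\frac{1}{\pi}\sum_{n\geq1}\sum_{k\geq1}\frac{\mu(n)}{nk}\sin(2\pi knx)$. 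The constant piece vanishes because $\sum_{n\geq1}\mu(n)/n=0$, the standard reformulation of the PNT that the lemma's description alludes to.

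For the double sum I would collect terms according to the product $N=kn$. Setting $n=d$ and $k=N/d$ with $d\mid N$, the weight $\frac{\mu(n)}{nk}$ collapses to $\frac{\mu(d)}{N}$, giving
\begin{equation}
\sum_{n\geq1}\sum_{k\geq1}\frac{\mu(n)}{nk}\sin(2\pi knx)=\sum_{N\geq1}\frac{\sin(2\pi Nx)}{N}\sum_{d\mid N}\mu(d).
\end{equation}
The divisor-sum identity (\ref{mobius vanishing}) kills every inner sum with $N>1$ and leaves $1$ for $N=1$, so the whole double sum reduces to the single term $\sin(2\pi x)$. Assembling the pieces yields $\sum_{n\geq1}\frac{\mu(n)}{n}\{nx\}=-\frac{1}{\pi}\sin(2\pi x)$, which is (\ref{sin series}) after multiplying by $-\pi$.

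The work, and the main obstacle, is in justifying the formal steps. Both the sawtooth series and the resulting double series are only conditionally convergent, so the regrouping by $N=kn$ cannot simply be asserted; I would make it rigorous by truncating at $N\leq K$ and using partial (Abel) summation against the uniformly bounded partial sums $\sum_{k\leq K}\sin(2\pi kx)/k$, then sending $K\to\infty$, with the cancellation in $\sum_{d\mid N}\mu(d)$ driving the tail to zero. The more delicate issue is that the lemma claims (\ref{sin series}) for \emph{all} $x\in\RR$, including the points where some argument $nx$ is an integer. Precisely there the sawtooth expansion converges to the midpoint $\tfrac{1}{2}$ rather than to $\{nx\}=0$, so substitution introduces a correction term $\tfrac{1}{2}\sum_{n:\,nx\in\mathbb{Z}}\frac{\mu(n)}{n}$ that must be shown to vanish.

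Handling this correction is the last step. For irrational $x$ the index set is empty and there is nothing to prove; for $x$ an integer both sides of (\ref{sin series}) are manifestly $0$ since every $\{nx\}=0$. For a general rational $x=a/b$ in lowest terms the condition $nx\in\mathbb{Z}$ becomes $b\mid n$, so I must verify $\sum_{b\mid n}\frac{\mu(n)}{n}=0$. When $b$ is not squarefree every numerator $\mu(n)$ vanishes; when $b$ is squarefree the sum factors as $\frac{\mu(b)}{b}\sum_{\gcd(m,b)=1}\frac{\mu(m)}{m}$, and the coprime M{\"o}bius sum is again $0$ by the same PNT-level input. This confirms the correction term disappears and the identity holds on all of $\RR$.
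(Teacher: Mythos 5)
Your proposal follows essentially the same route as the paper: substitute the sawtooth Fourier series into the sum, kill the constant term using $\sum_{n\geq 1}\mu(n)/n=0$, and collapse the double sum via the divisor identity $\sum_{d\mid N}\mu(d)$. In fact you are more careful than the paper on the two points it glosses over --- justifying the rearrangement of the conditionally convergent double series, and treating rational $x=a/b$ with $b>1$, where the terms with $b\mid n$ have $nx\in\mathbb{Z}$ so the sawtooth expansion converges to $\tfrac{1}{2}$ rather than to $\{nx\}=0$ (the paper only checks the case $x\in\mathbb{Z}$ separately).
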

\begin{proof}
We note the following Fourier series representation of $\left\{x\right\}$ for $x \notin \mathbb{Z}$ \cite{titchmarsh}:
\begin{equation}
\left\{x\right\}=\frac{1}{2}-\frac{1}{\pi}\sum_{k=1}^{\infty}\frac{\sin(2\pi k x)}{k} \label{frac fourier}
\end{equation}
We then apply (\ref{frac fourier}) to write 
\begin{align}
\sum_{n=1}^{\infty}\frac{\mu(n)}{n}\sum_{k=1}^{\infty}\frac{\sin(2\pi kn x)}{k}
=\sum_{r=1}^{\infty}\frac{\sin(2\pi r x)}{r}\sum_{n|r}\mu(n)=\sin(2\pi x), \label{lemma 1 some work}
\end{align}
where $r=nk$ and the last step in (\ref{lemma 1 some work}) is due to (\ref{mobius vanishing}). By (\ref{frac fourier}) and (\ref{lemma 1 some work}) we then have 
\begin{equation}
\sin(2\pi x)=\pi\sum_{n=1}^{\infty}\frac{\mu(n)}{n}\left(\frac{1}{2}-\left\{nx\right\}\right)=\frac{\pi}{2}\sum_{n=1}^{\infty}\frac{\mu(n)}{n}-\pi\sum_{n=1}^{\infty}\frac{\mu(n)}{n}\left\{nx\right\}. \label{lemma 1 last step}
\end{equation}
We then make the important note, first observed by Landau \cite{landau}, that by the PNT
\begin{equation}
\sum_{n=1}^{\infty}\frac{\mu(n)}{n}=0, \label{mob over n vanish}
\end{equation}
which, with (\ref{lemma 1 last step}), gives (\ref{sin series}). We lastly note that (\ref{sin series}) gives the correct result for $x\in \mathbb{Z}$ and thus (\ref{sin series}) holds for all $x\in \mathbb{R}$.
\end{proof}

The next lemma describes the mean square of $\{nx\}$. 
\begin{lemma}For all $n \in \mathbb{N}$,
\begin{equation}
E_{X}\left(\{n x\}^{2}\right)=\frac{1}{3}.\label{mean sq frac}
\end{equation}
\label{frac cov lemma}
\end{lemma}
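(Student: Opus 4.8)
The plan is to exploit the periodicity of the integrand together with the crucial hypothesis $X \in \mathbb{N}$. First I would unfold the definition (\ref{expected value}) and perform the linear substitution $u = nx$, which converts the expected value into
\begin{equation*}
E_X\left(\{nx\}^2\right) = \frac{1}{X}\int_0^X \{nx\}^2\,dx = \frac{1}{nX}\int_0^{nX}\{u\}^2\,du.
\end{equation*}
Since $n$ and $X$ are both positive integers, the new upper limit $nX$ is itself a positive integer. This is the single place where the integrality of $X$ enters, and it is precisely what makes the remaining computation exact rather than merely asymptotic.

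Next I would use the fact that $\{u\}$, and hence $\{u\}^2$, is periodic with period $1$. Because $[0,nX]$ decomposes into exactly $nX$ unit subintervals, the integral splits as a sum of $nX$ identical contributions:
\begin{equation*}
\int_0^{nX}\{u\}^2\,du = \sum_{j=0}^{nX-1}\int_j^{j+1}\{u\}^2\,du = nX\int_0^1\{u\}^2\,du.
\end{equation*}
On the unit interval one has $\{u\} = u$ by (\ref{frac}), so the surviving integral is elementary, namely $\int_0^1 u^2\,du = \tfrac{1}{3}$.

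Combining the two steps yields $E_X\left(\{nx\}^2\right) = \frac{1}{nX}\cdot nX\cdot \tfrac{1}{3} = \tfrac{1}{3}$, independently of $n$, which is the assertion. I do not anticipate any genuine obstacle; the only subtlety worth flagging is the role of the hypothesis $X \in \mathbb{N}$, which guarantees that no partial period is left over at the right endpoint. Were $X$ allowed to be non-integral, an incomplete final period would contribute a boundary term of size $O(1/X)$, and the identity would hold only in the limit rather than exactly.
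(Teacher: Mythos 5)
Your proof is correct, and it takes a genuinely more streamlined route than the paper. The paper expands $\{nx\}^{2}=n^{2}x^{2}-2nx\lfloor nx\rfloor+\lfloor nx\rfloor^{2}$, substitutes $u=nx$ in each of the two floor-function integrals separately, breaks each into unit subintervals, and evaluates the resulting sums with the Faulhaber formulae (\ref{faul 1}) and (\ref{faul 2}) before watching everything cancel down to $\tfrac{1}{3}$. You instead apply the substitution $u=nx$ once to the quantity $\{nx\}^{2}$ itself and invoke the period-$1$ structure of $\{u\}^{2}$ directly, reducing the whole computation to $\int_{0}^{1}u^{2}\,du=\tfrac{1}{3}$. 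The two arguments rest on the same underlying facts --- the change of variables and the decomposition of $[0,nX]$ into exactly $nX$ unit intervals, which is where $X\in\mathbb{N}$ enters in both cases --- but yours isolates the periodicity as the single organizing principle rather than rediscovering it term by term, avoids the Faulhaber machinery entirely, and makes the exact cancellation a non-event rather than a small miracle. Your closing remark that a non-integral $X$ would leave a partial period contributing an $O(1/X)$ boundary term is a correct and worthwhile observation that the paper's computational route obscures.
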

\begin{proof}
We begin by writing out the expected value as
\begin{align}
E_{X}\left(\{n x\}^{2}\right)=\frac{1}{X}\int_{0}^{X}n^{2}x^{2}dx-\frac{2}{X}\int_{0}^{X}n x \lfloor n x\rfloor dx
+E_{X}\left(\lfloor n x \rfloor^{2}\right). \label{written out}
\end{align}
We next evaluate the second integral in (\ref{written out}). We make the substitution $u=n x$ to give
\begin{align}
\frac{1}{X}\int_{0}^{X}n x\lfloor n x \rfloor dx=\frac{1}{n X}\int_{0}^{n X}u \lfloor u \rfloor du \nonumber \\
=\frac{1}{n X}\left(0 \int_{0}^{1}udu+1\int_{1}^{2}udu+2\int_{2}^{3}udu+...+(n X-1)\int_{n X-1}^{n X}udu\right) \nonumber \\
=\frac{1}{n X}\sum_{k=1}^{n X-1}\frac{k\left((k+1)^{2}-k^{2}\right)}{2}=\frac{1}{n X}\sum_{k=1}^{n X-1}k^{2}+\frac{1}{n X}\sum_{k=1}^{n X-1}k. \label{written out 2}
\end{align}
We then apply the well-known Faulhaber formulae \cite{jacobi}
\begin{equation}
\sum_{k=1}^{K}k=\frac{K(K+1)}{2}
\label{faul 1}
\end{equation}
and 
\begin{equation}
\sum_{k=1}^{K}k^{2}=\frac{K(K+1)(2K+1)}{6}
\label{faul 2}
\end{equation}
in (\ref{written out 2})'s last line, simplifying to write 
\begin{equation}
\frac{1}{X}\int_{0}^{X}n x\lfloor n x \rfloor dx=\frac{(n X-1)(2n X-1)}{6}+\frac{n X-1}{4}. \label{second integral done}
\end{equation}
We note that the first integral in (\ref{written out}) is equal $n^{2} X^{2}/3$ and we then apply (\ref{second integral done}) for the second integral in (\ref{written out}), simplify, and rearrange to give
\begin{equation}
E_{X}\left(\{n x\}^{2}\right)=\frac{n^{2} X^{2}}{3}-\frac{(n X-1)(2n X-1)}{3}-\frac{n X-1}{2}+E_{X}\left(\lfloor n x \rfloor^{2}\right).
\label{frac cov}
\end{equation}
We next apply the substitution $u=n x$ and again apply (\ref{faul 2}) to write
\begin{align}
E_{X}\left(\lfloor n x \rfloor^{2}\right)=\frac{1}{X}\int_{0}^{X}\lfloor n x \rfloor^{2}dx=\frac{1}{n X}\int_{0}^{n X}\lfloor u \rfloor^{2}du \nonumber \\
=\frac{1}{n X}\left(0^{2}\int_{0}^{1}du+1^{2}\int_{1}^{2}du+2^{2}\int_{2}^{3}du...+(n X-1)^{2}\int_{n X-1}^{n X}du\right) 
\nonumber \\
=\frac{1}{n X}\sum_{k=1}^{n X-1}k^{2}=\frac{(n X-1)(2n X-1)}{6}. \label{meansqfloor}
\end{align}
We apply (\ref{meansqfloor}) in (\ref{frac cov}) to give
\begin{equation}
E_{X}\left(\{n x\}^{2}\right)=\frac{n^{2} X^{2}}{3}-\frac{(n X-1)(2n X-1)}{6}-\frac{n X-1}{2}. \label{frac cov final}
\end{equation}
Cancelling terms then gives (\ref{mean sq frac}).
\end{proof}

\section{Proof}
\begin{proof}
We first note from Lemma \ref{sin series lemma}'s result (\ref{sin series}) that, for all $x\in \mathbb{R}$,
\begin{equation}
\sin\left(2\pi x\right)+\varepsilon_{N}(x)=-\pi\sum_{n\leq N}\frac{\mu(n)}{n}\{nx\}, \label{thm1 step1}
\end{equation}
where $\varepsilon_{N}(x)\rightarrow 0$ as $N\rightarrow \infty$. We then write
\begin{equation}
\left(\sin(2\pi x)+\varepsilon_{N}(x)\right)^{2}=\sin^{2}(2\pi x)+2\sin(2\pi x)\varepsilon_{N}(x)+\varepsilon_{N}^{2}(x)=\sin^{2}(2\pi x)+\widetilde{\varepsilon}_{N}(x), \label{little bit}
\end{equation}
where
\begin{equation}
\widetilde{\varepsilon}_{N}(x)=2\sin(2\pi x)\varepsilon_{N}(x)+\varepsilon_{N}^{2}(x). \label{mod error}
\end{equation}
We then square (\ref{thm1 step1}) and apply (\ref{little bit}) to write
\begin{equation}
\sin^{2}(2\pi x)+\widetilde{\varepsilon}_{N}(x)=\pi^{2}\sum_{n\leq N}\frac{\mu^{2}(n)}{n^{2}}\{nx\}^{2}+\pi^{2}\sum_{\substack{n \neq m \\ n,m \leq N}}\frac{\mu(n)\mu(m)}{nm}\{nx\}\{mx\}.
\label{thm1 step2}
\end{equation}
We then apply the expected value to (\ref{thm1 step2}). We note that 
\begin{equation}
E_{X}\left(\sin^{2}(2\pi x)\right)=\frac{1}{2}-\frac{\sin(4\pi X)}{8\pi X}=\frac{1}{2}+O\left(\frac{1}{X}\right), \label{mean sq sine}
\end{equation}
which we apply to (\ref{thm1 step2})'s left-hand side, while on (\ref{thm1 step2})'s right-hand side we apply (\ref{mean sq frac}) from Lemma \ref{frac cov lemma} to write
\begin{equation}
\frac{1}{2}+O\left(\frac{1}{X}\right)+E_{X}\left(\widetilde{\varepsilon}_{N}(x)\right)=\frac{\pi^{2}}{3}\sum_{n\leq N}\frac{\mu^{2}(n)}{n^{2}}+\pi^{2}\sum_{\substack{n \neq m \\ n,m \leq N}}\frac{\mu(n)\mu(m)}{nm}E_{X}\left(\{nx\}\{mx\}\right). \label{thm1 step3}
\end{equation}
We then note the elementary result
\begin{equation}
\sum_{n=1}^{\infty}\frac{\left|\mu(n)\right|}{n^{s}}=\frac{\zeta(s)}{\zeta(2s)} \label{dlmf gen}
\end{equation}
for all $\textnormal{Re}(s)>1$ and hence, since $\left|\mu(n)\right|=\mu^{2}(n)$,
\begin{equation}
\sum_{n\leq N}\frac{\mu^{2}(n)}{n^{2}}=\frac{\zeta(2)}{\zeta(4)}-\sum_{n>N}\frac{\mu^{2}(n)}{n}. \label{pre dlmf 2}
\end{equation}
We next note that the density of square-free integers satisfies the well-known result \cite{gegenbauer} \cite{hardy wright}
\begin{equation}
\sum_{n \leq N}\mu^{2}(n)=\frac{N}{\zeta(2)}+O\left(\sqrt{N}\right). \label{sq free density}
\end{equation}
We then apply Abel's partial summation formula with (\ref{sq free density}) to write
\begin{align}
\sum_{n>N}\frac{\mu^{2}(n)}{n^{2}}=-\frac{N/\zeta(2)+O\left(\sqrt{N}\right)}{N^{2}}+2\int_{N}^{\infty}\frac{u/\zeta(2)+O\left(\sqrt{u}\right)}{u^{3}}du \nonumber \\ =\frac{1}{N\zeta(2)}+O\left(\frac{1}{N^{3/2}}\right),
\end{align}
which we may apply to (\ref{pre dlmf 2}) to give
\begin{equation}
\sum_{n\leq N}\frac{\mu^{2}(n)}{n^{2}}=\frac{\zeta(2)}{\zeta(4)}+O\left(\frac{1}{N}\right)=\frac{\pi^{2}/6}{\pi^{4}/90}+O\left(\frac{1}{N}\right)=\frac{15}{\pi^{2}}+O\left(\frac{1}{N}\right). \label{dlmf 2}
\end{equation}
Applying (\ref{dlmf 2}) in (\ref{thm1 step3}) and rearranging shows that 
\begin{equation}
\sum_{\substack{n \neq m \\ n,m \leq N}}\frac{\mu(n)\mu(m)}{nm}E_{X}\left(\{nx\}\{mx\}\right)=-\frac{9}{2\pi^{2}}+O\left(\frac{1}{X}\right)+O\left(\frac{1}{N}\right)+\frac{E_{X}\left(\widetilde{\varepsilon}_{N}(x)\right)}{\pi^{2}} \label{main result}
\end{equation}
as $N\rightarrow \infty$.

We next note from (\ref{thm1 step1}) that 
\begin{equation}
\varepsilon_{N}(x)=\pi\sum_{n>N}\frac{\mu(n)}{n}\{nx\}.
\label{resid}
\end{equation}
We then apply partial summation with (\ref{mertens}) to write 
\begin{align}
\sum_{n>N}\frac{\mu(n)}{n}\{nx\}=-\frac{M(N)}{N}\{Nx\}-\int_{N}^{\infty}M(u)\left(\frac{\{ux\}}{u}\right)'du \nonumber \\ 
=-\frac{M(N)}{N}\{Nx\}-\int_{N}^{\infty}\frac{M(u)}{u}\left(\{ux\}\right)'du+\int_{N}^{\infty}\frac{M(u)}{u^{2}}\{ux\}du. \label{resid partial sum}
\end{align}
We note that 
\begin{equation}
\left(\{ux\}\right)'=x-\sum_{k \in \mathbb{Z}}\delta\left(ux-k\right) \label{frac deriv}
\end{equation}
and hence 
\begin{align}
\int_{N}^{\infty}\frac{M(u)}{u}\left(\{ux\}\right)'du=x\int_{N}^{\infty}\frac{M(u)}{u}du-\sum_{k>Nx}\frac{M(k/x)}{k/x}.\label{rel int}
\end{align}
We again apply partial summation and a change of variable to write
\begin{align}
\sum_{k>Nx}\frac{M\left(k/x\right)}{k/x}=-\frac{M(N)}{N}-\int_{Nx}^{\infty}M\left(\frac{u}{x}\right)\left(\frac{x}{u}\right)'du \nonumber \\ =-\frac{M(N)}{N}+\int_{Nx}^{\infty}M\left(\frac{u}{x}\right)\frac{x}{u^{2}}du=-\frac{M(N)}{N}+\int_{N}^{\infty}\frac{M(u)}{u^{2}}du. \label{weird sum}
\end{align}
Applying (\ref{weird sum}) in (\ref{rel int}) and then applying the result in (\ref{resid partial sum}) thus gives
\begin{equation}
\sum_{n>N}\frac{\mu(n)}{n}\{nx\}=-\frac{M(N)}{N}\left(1+\{Nx\}\right)+\int_{N}^{\infty}\frac{M(u)}{u^{2}}\left(1+\{ux\}\right)du-x\int_{N}^{\infty}\frac{M(u)}{u}du. \label{resid partial sum finalish}
\end{equation}

We next note the classical result (see e.g. \cite{allison})
\begin{equation}
M(u)=O\left(\frac{u}{e^{c\sqrt{\log u}}}\right) 
\label{classic}
\end{equation}
for $c>0$, from which we have $M(N)/N=O\left(e^{-c\sqrt{\log u}}\right)$ and 
\begin{equation}
\int_{N}^{\infty}\frac{M(u)}{u^{2}}\left(1+\{ux\}\right)du=O\left(\int_{N}^{\infty}\frac{du}{ue^{c\sqrt{\log u}}}\right)=O\left(\frac{\sqrt{\log N}}{e^{c\sqrt{\log N}}}\right), \label{middle term}
\end{equation}
which we then apply in (\ref{resid partial sum finalish}) to write 
\begin{equation}
\sum_{n>N}\frac{\mu(n)}{n}\{nx\}=O\left(\frac{\sqrt{\log N}}{e^{c\sqrt{\log N}}}\right)-x\int_{N}^{\infty}\frac{M(u)}{u}du. \label{resid partial sum finaler}
\end{equation}
Since (\ref{resid partial sum finaler}) vanishes as $N\rightarrow \infty$ by (\ref{sin series}), we conclude that 
\begin{equation}
\int_{N}^{\infty}\frac{M(u)}{u}du\rightarrow 0 \label{nice}
\end{equation}
as $N\rightarrow \infty$. Then by (\ref{mod error}), (\ref{resid}), and (\ref{resid partial sum finaler}) we have 
\begin{align}
\widetilde{\varepsilon}_{N}(x)=2\pi\sin(2\pi x)\left(O\left(\frac{\sqrt{\log N}}{e^{c\sqrt{\log N}}}\right)-x\int_{N}^{\infty}\frac{M(u)}{u}du\right) \nonumber \\ +O\left(\frac{\log N}{e^{2c\sqrt{\log N}}}\right)+\pi^{2}x^{2}\left(\int_{N}^{\infty}\frac{M(u)}{u}du\right)^{2} +  xO\left(\frac{\sqrt{\log N}}{e^{c\sqrt{\log N}}}\right)\int_{N}^{\infty}\frac{M(u)}{u}du. \label{mod error long}
\end{align}
We then apply the expected value, noting that 
\begin{equation}
E_{X}\left(2\pi \sin\left(2\pi x\right)\right)=\frac{1-\cos\left(2\pi X\right)}{X}=O\left(\frac{1}{X}\right)
\end{equation}
and
\begin{equation}
E_{X}\left(2\pi x\sin\left(2\pi x\right)\right)=\frac{\sin(2\pi X)}{2\pi X}-\cos(2\pi X)=O\left(\frac{1}{X}\right)-\cos(2\pi X)
\end{equation}
to give 
\begin{align}
E_{X}\left(\widetilde{\varepsilon}_{N}(x)\right)=O\left(\frac{1}{X}\right)O\left(\frac{\sqrt{\log N}}{e^{c\sqrt{\log N}}}\right)+\left(\cos\left(2\pi X\right)+O\left(\frac{1}{X}\right)\right)\int_{N}^{\infty}\frac{M(u)}{u}du \nonumber \\ +O\left(\frac{\log N}{e^{2c\sqrt{\log N}}}\right)+\frac{\pi^{2}X^{2}}{3}\left(\int_{N}^{\infty}\frac{M(u)}{u}du\right)^{2}\nonumber \\ +\frac{X}{2}O\left(\frac{\sqrt{\log N}}{e^{c\sqrt{\log N}}}\right)\int_{N}^{\infty}\frac{M(u)}{u}du. \label{mod error very long}
\end{align}
Applying (\ref{mod error very long}) in (\ref{main result}) and collecting terms gives
\begin{align}
\sum_{\substack{n \neq m \\ n,m \leq N}}\frac{\mu(n)\mu(m)}{nm}E_{X}\left(\{nx\}\{mx\}\right)=-\frac{9}{2\pi^{2}}+O\left(\frac{\log N}{e^{2c\sqrt{\log N}}}\right)\nonumber \\ +O\left(\frac{1}{X}\right)\left(1+O\left(\frac{\sqrt{\log N}}{e^{c\sqrt{\log N}}}\right)+\int_{N}^{\infty}\frac{M(u)}{u}du\right) \nonumber \\+\left(\cos\left(2\pi X\right)+O\left(\frac{X\sqrt{\log N}}{e^{c\sqrt{\log N}}}\right)\right)\int_{N}^{\infty}\frac{M(u)}{u}du+\frac{X^{2}}{3}\left(\int_{N}^{\infty}\frac{M(u)}{u}du\right)^{2}. \label{main result long}
\end{align}
Taking $N\rightarrow \infty$ and applying (\ref{nice}) in (\ref{main result long}) then completes the proof.
\end{proof}

\section{Acknowledgements}
The author thanks Altan Allawala and Christophe Vignat for their helpful comments.

\end{document}